\newtheoremstyle{brakets}
{}
{}
{\itshape}
{}
{}
{}
{\newline }
{\thmname{\bfseries {#1}}\thmnumber{ #2:}\thmnote{ \mdseries#3}}
\theoremstyle{brakets}
\newtheorem{theorem}{Theorem}
\newtheorem{lemma}[theorem]{Lemma}
\renewenvironment{proof}[1][\proofname]{\par
	\pushQED{\qed}%
	\normalfont \topsep\z@
	\trivlist
	\item[\hskip2em
	\itshape
	#1\@addpunct{:}]\ignorespaces
}{%
	\popQED\endtrivlist\@endpefalse
}
\title{\LARGE \bf
Feasibility of multiple robust control barrier functions\\ for bounding box constraints
}
\author{Mark Spiller,  Emilia Isbono, and Philipp Schitz
\thanks{*This work was supported by the German federal aviation research program (LuFo ID: 20D2111C).}
\thanks{The authors are with the
	German Aerospace Center (DLR), Institute of Flight Systems, 38108, Braunschweig, Germany. {\tt\small mark.spiller@dlr.de, emilia.isbono@dlr.de, philipp.schitz@dlr.de}}%
}
\begin{document}

\maketitle
\thispagestyle{empty}
\pagestyle{empty}

\begin{abstract}

Enforcing multiple constraints based on the concept of control barrier functions (CBFs) is a remaining challenge because each of the CBFs requires a condition on the control inputs to be satisfied which may easily lead to infeasibility problems. The problem becomes even more challenging with input constraints and disturbances. In this paper, we consider enforcement of bounding box constraints for a second order system under limited control authority and input disturbances. To solve the constrained control problem, we apply multiple robust control barrier functions (RCBFs) which, in general, do not provide a feasible solution to the problem. However, we derive conditions on how to select the RCBF parameters to guarantee that a feasible solution always exists. 

\end{abstract}

\section{Introduction}

In many fields of application (autonomous vehicles, robotics, aerospace systems, etc.), control requires constraints to be satisfied to enforce performance or safety requirements. Control barrier functions (CBFs) offer the ability to guarantee constraint satisfaction by rendering a subset of the state space forward invariant. An advantage of CBFs is that they can be applied in combination with a nominal controller, altering the control inputs in such a way that safety with respect to defined constraints is achieved \cite{ames2019control}. To make CBFs applicable to real-world systems, it is often necessary to additionally consider input constraints and disturbances. There exist contributions to handle the problem of CBF design under disturbances \cite{Alan2022Safe,Buch2022Robust,Cohen2022Robust,Seiler2022Control,Das2022Robust,Wang2023Robust,JANKOVIC2018359}, under input constraints \cite{Agrawal2021Safe,Breeden2021High,Dai2023Convex,Rabiee2023Soft,XIAO2022109960,Breeden2023Compositions,Zeng2021SafetyCritical}, or under input constraints and disturbances \cite{breeden2023robust,Choi2021Robust}. However, analyzing the feasibility of multiple CBFs, especially in presence of disturbances and input constraints, remains an open research topic. 

One practical way of handling multiple CBFs is prioritization \cite{Notomista2019An_O,Srinivasan2021ControlOf}. For this purpose, slack variables are introduced to relax the CBF conditions. Prioritization is then achieved by weighting of the slack variables. However, because of relaxation, constraint satisfaction cannot be guaranteed. Another way to handle the feasibility problem is to build one CBF from several CBFs which is then always feasible. In \cite{Black2023Adaptation}, a smooth candidate mutual CBF composed of individual CBFs and adaptive gains is formulated. If a quadratic program used to determine the adaptive gains is feasible, it can be guaranteed that the candidate mutual CBF is indeed a CBF. A mutual nonsmooth CBF via logic operators is constructed in \cite{Glotfelter2017Nonsmooth}. An extension of the logic operator concept with smooth CBFs is proposed in \cite{molnar2023composing}. Instead of a mutual CBF, the authors of \cite{Isaly2024OnTheFeas} introduce a vector-valued candidate CBF and study under which conditions the vector valued CBF can become valid. A grid-based sampling algorithm that examines feasibility of multiple CBFs is proposed in \cite{Tan2022Compatibility}. The work of \cite{Breeden2023Compositions} studies mutual feasibility of CBFs for a second-order system. A drawback of all aforementioned approaches is that they do not explicitly consider input constraints and disturbances to be present. 

This paper considers the problem of satisfying bounding box constraints for a second order system under limited control inputs and input disturbances. Using the concept of robust control barrier functions (RCBFs), a single constraint can be enforced in presence of disturbances if sufficient control authority is available \cite{breeden2023robust}. However, for the bounding box constraint considered in this paper, RCBFs do not guarantee a feasible solution in general. We derive conditions on how to select the RCBF parameters so that a solution to the constrained control problem can always be found.

The paper is organized as follows. In Section \ref{sec:background}, a brief review about the concept of RCBFs is given. The feasibility of RCBFs for bounding box constraints is studied in Section \ref{sec:main_results}. A numerical example of altering the flight level of an aerial vehicle under input and state constraints as well as disturbances is considered in Section \ref{sec:num_example}.

\section{Background}
\label{sec:background}

Consider an input-affine nonlinear system
\begin{align}
	\dot x
	=
	f(x)+g(x)u+g(x)w
\end{align}
with states 
$x \in \mathbb{R}^n$, constrained control inputs $u\in \mathcal{U}\subset \mathbb{R}^m$ and input disturbances $w\in \mathcal{W}\subset \mathbb{R}^m$, where $\mathcal{U}$ and $\mathcal{W}$ are compact. Functions $f(x)\colon \mathbb{R}^n \to \mathbb{R}^n$ and $g(x)\colon \mathbb{R}^n \to \mathbb{R}^{n\times m}$ are assumed locally Lipschitz continuous in $x$. The states are constrained by a constraint function	$h(t,x)\colon [t_0,\infty)\times \mathbb{R}^n \to \mathbb{R}$ with $h\in \mathcal{G}^2$, where $\mathcal{G}^2$ denotes the set of functions that are $2$-times total differentiable in time and are of relative degree two in the sense that $u$ and $w$ appear explicitly in the second time derivative. Based on the constraint function $h$ the time-varying zero sublevel set 
\begin{align}
	\mathcal{S}(t)=\{x \in \mathbb{R}^n \mid h(t,x)\le0\}
\end{align}
is defined.
The goal is to achieve forward invariance of a subset of $\mathcal{S}(t)$ in order to satisfy the constraints described by $h$ under input constraints and disturbances. This problem is studied in detail in \cite{breeden2023robust} where a solution is provided by so-called robust control barrier functions (RCBFs). The definition of RCBFs is given as follows based on Theorem 9 of \cite{breeden2023robust}.

\begin{theorem}[{\cite[Theorem~9]{breeden2023robust}}]
Suppose $h\in\mathcal{G}^2$ and there exists $a_{\max}>0$ such that $\forall (t,x)\in [t_0,\infty)\times \mathcal{S}(t)\colon$
\begin{align}
	\max_{w\in \mathcal{W}}
	\left(
	\inf_{u\in \mathcal{U}}
	\ddot h(t,x,u,w)
	\right)
	\le -a_{\max}.
\end{align}
Then the function
\begin{align}
	H(t,x) = h(t,x) + \frac{|\dot h(t,x)|\dot h(t,x)}{2a_{\max}}
\end{align}
is a RCBF on the set
\begin{align}
	\mathcal{S}^{res}_H(t)
	=
	\{x \in \mathbb{R}^n \mid 
	H(t,x)\le0
	\land
	h(t,x)\le0
	\}
	\subseteq
	\mathcal{S}(t)\notag,
\end{align}
i.e.
\begin{align}
	\max_{w\in \mathcal{W}}\left [	\inf_{u\in \mathcal{U}}
	\left (\dot H(t,x,u,w)\right)
	\right]
	\le
	\alpha (-H(t,x))
\end{align}
for any class-$\mathcal{K}$ function $\alpha$ and any $t\in[t_0,\infty)$, $x\in\mathcal{S}^{res}_H(t)$. The set
\begin{align}
	&
	U(t,x)=
	\left\{
	u\in\mathcal{U}
	\mid
	\max_{w\in \mathcal{W}}\left (	
	\dot H(t,x,u,w)
	\right)
	\le
	\alpha (-H(t,x))
	\right\}
	\notag
\end{align}
is non-empty for any class-$\mathcal{K}$ function $\alpha$ and any $t\in[t_0,\infty)$, $x\in\mathcal{S}^{res}_H(t)$. Moreover, any control law $u(t,x)$ that is locally Lipschitz continuous in $x$ and piecewise continuous in $t$ such that $\forall (t,x)\in[t_0,\infty)\times\mathcal{S}^{res}_H(t) \colon u(t,x) \in U(t,x)$ renders $\mathcal{S}^{res}_H$ forward invariant.
\label{num:theorem_theo9_rcbf_paper}
\end{theorem}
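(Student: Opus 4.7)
The plan is to establish the three claims in sequence: the RCBF inequality for $H$ on $\mathcal S^{res}_H$, non-emptiness of $U(t,x)$, and forward invariance of $\mathcal S^{res}_H$ under any locally Lipschitz feedback drawn from $U$. The observation that makes everything go through is that, although $H$ contains an absolute value, the expression $|\dot h|\dot h$ is continuously differentiable in $t$ with
\begin{equation*}
\frac{d}{dt}\bigl(|\dot h|\dot h\bigr) = 2|\dot h|\ddot h,
\end{equation*}
which one verifies by splitting into the cases $\dot h>0$ and $\dot h<0$ and checking that both branches agree at $\dot h=0$.

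Using this identity I would differentiate to obtain $\dot H=\dot h+\tfrac{|\dot h|}{a_{\max}}\ddot h$. Because $\dot h$ is independent of $u,w$ (relative degree two) and $|\dot h|\ge 0$ is a scalar factor, the max--inf passes inside, giving
\begin{equation*}
\max_{w\in\mathcal W}\inf_{u\in\mathcal U}\dot H\;\le\;\dot h+\tfrac{|\dot h|}{a_{\max}}(-a_{\max})\;=\;\dot h-|\dot h|\;\le\;0
\end{equation*}
upon applying the hypothesis on $\ddot h$. On $\mathcal S^{res}_H$ we have $H\le 0$, hence $\alpha(-H)\ge 0$ for any class-$\mathcal K$ function $\alpha$, which delivers the RCBF inequality.

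For non-emptiness of $U(t,x)$ I would exploit that $\ddot h$ depends affinely on $(u,w)$, since both enter the dynamics linearly through $g(x)$. Sion's minimax theorem (applied, if necessary, after convexifying the compact sets $\mathcal U,\mathcal W$, which does not affect the affine extrema) then yields $\inf_u\max_w\ddot h=\max_w\inf_u\ddot h\le-a_{\max}$, with the outer infimum attained at some $u^\star\in\mathcal U$. Any such $u^\star$ satisfies $\max_w\dot H(\cdot,u^\star,\cdot)\le\dot h-|\dot h|\le 0\le\alpha(-H)$, so $u^\star\in U(t,x)$. I expect this swap between $\max_w$ and $\inf_u$ to be the main technical subtlety, because the hypothesis provides only the $\max\inf$ direction and the reversal relies on affine structure.

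For forward invariance, any locally Lipschitz selection $u(t,x)\in U(t,x)$ enforces $\dot H\le\alpha(-H)$ along every admissible trajectory irrespective of the disturbance; a standard comparison-lemma argument then keeps $H(t,x(t))\le 0$ for all $t\ge t_0$ whenever this holds initially. The $H$-bound alone does not guarantee $h\le 0$ when $\dot h<0$, so to preserve the second defining inequality of $\mathcal S^{res}_H$ I would use a Nagumo-type argument at $\{h=0\}$: on this set the constraint $H\le 0$ collapses to $|\dot h|\dot h\le 0$, i.e.\ $\dot h\le 0$, so $h$ cannot cross zero from below. Combining these two observations gives forward invariance of $\mathcal S^{res}_H$.
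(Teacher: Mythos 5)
The paper does not prove this statement: it is quoted verbatim as background (Theorem~9 of the cited reference \cite{breeden2023robust}), so there is no in-paper proof to compare against. Judged on its own, your reconstruction is sound and matches the standard argument for this result. The identity $\tfrac{d}{dt}(|\dot h|\dot h)=2|\dot h|\ddot h$ is correct (and is exactly what the authors use implicitly in Section~III when they write $\dot H_{ub}=\dot h_{ub}+|\dot h_{ub}|\ddot h_{ub}/a_{\max}$), the bound $\max_w\inf_u\dot H\le\dot h-|\dot h|\le0\le\alpha(-H)$ on $\mathcal S^{res}_H$ is right, and your two-part invariance argument ($H\le0$ by a comparison lemma, then $h\le0$ because $H\le0$ forces $\dot h\le0$ on $\{h=0\}$ and $\dot h<0$ wherever $h>0$) is the correct way to handle the fact that $\mathcal S^{res}_H$ is an intersection of two sublevel sets. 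Two small remarks. First, invoking Sion's minimax theorem for the $\max_w\inf_u$ versus $\inf_u\max_w$ swap is more machinery than needed: for the input-affine dynamics $\ddot h$ is additively separable in $u$ and $w$ (no cross terms), so $\max_w\inf_u\ddot h=\inf_u\max_w\ddot h$ holds trivially, and attainment of the infimum follows from compactness of $\mathcal U$ and continuity; this also sidesteps the convexity hypotheses Sion requires. Second, when $\dot h=0$ the factor $|\dot h|/a_{\max}$ vanishes and $\dot H=\dot h=0$ for every $u$, so the claimed membership $u^\star\in U(t,x)$ should be checked in that degenerate case separately (it holds, since $0\le\alpha(-H)$), which you implicitly do but do not state.
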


\section{Main Results}
\label{sec:main_results}

Consider the second-order system
\begin{align}
	\begin{bmatrix}
		\dot x_1\\
		\dot x_2
	\end{bmatrix}
	=
	\begin{bmatrix}
		x_2\\
		f(x_1,x_2)+ g(x_1,x_2)(\mu + \nu)
	\end{bmatrix}
	\label{num:mr_spec_sys_desc_pre}
\end{align}
with states 
$[x_1, x_2]^T \in \mathbb{R}^2$, control input $\mu\in[\mu_{lb},\mu_{ub}]\subset  \mathbb{R}$, input disturbance $\nu\in [-\nu_{\max},\nu_{\max}]\subset \mathbb{R}$, and $\forall x_1, x_2\colon g(x_1,x_2)\not=0, |g^{-1}(x_1,x_2)|\le M$. State $x_1$ is assumed to be constrained as
\begin{align}
	h_{ub}(t,x) &= x_1-l_{ub}(t)\le0,
	\label{num:mr_def_ub_cstr}\\
	h_{lb}(t,x) &= -x_1+l_{lb}(t)\le0
	\label{num:mr_def_lb_cstr}
\end{align}
with time-varying bounds $l_{ub}(t)\ge l_{lb}(t)$. For the bounds, we assume
$\dot l_{ub}(t)=\dot l_{lb}(t)=\dot l(t)$
as well as  $\max_t\{|\ddot l(t)|\}\le \ddot l_{\max}$. We apply $\mu=g^{-1}(u-f)$ to transform (\ref{num:mr_spec_sys_desc_pre}) into
\begin{align}
	\begin{bmatrix}
		\dot x_1\\
		\dot x_2
	\end{bmatrix}
	=
	\begin{bmatrix}
		x_2\\
		u + g(x_1,x_2)\nu
	\end{bmatrix}
	=
	\begin{bmatrix}
		x_2\\
		u + w
	\end{bmatrix}
	\label{num:mr_spec_sys_desc}
\end{align}
with control input $u\in [u_{lb},u_{ub}]\subset \mathbb{R}$ and disturbance $w\in [-w_{\max},w_{\max}]\subset \mathbb{R}$. From (\ref{num:mr_def_ub_cstr}), (\ref{num:mr_def_lb_cstr}), (\ref{num:mr_spec_sys_desc}) it follows
\begin{alignat}{3}
	\dot h_{ub} &= x_2- \dot l
	\qquad
	&\dot h_{lb} &= -x_2+\dot l,
	\label{num:mr_def_h_ub_h_lb_dot}\\
	\ddot h_{ub} &= u + w- \ddot l
	\qquad
	&\ddot h_{lb} &= -u - w+\ddot l.
	\label{num:mr_def_h_ub_h_lb_ddot}
\end{alignat}
We assume that the available control authority $u\in [u_{lb},u_{ub}]$ is sufficient to achieve
\begin{align}
	&
	\max\{
	u_{lb} + w_{\max}+ \ddot l_{\max},
	-u_{ub} +w_{\max} +\ddot l_{\max}
	\}
	\notag\\
	&\qquad \qquad \qquad \qquad \qquad \qquad \qquad \quad
	\le - a_{\max}
	<0
	\label{num:mr_asspt_ul_ub_cstr}
\end{align} 
so that 
\begin{align}
	&\max_{|w|\le w_{\max}}
	\left(
	\inf_{u\in [u_{lb},u_{ub}]}
	\ddot h_{ub}
	\right)
	= 
	u_{lb} + w_{\max}- \ddot l\notag\\
	&
	\qquad \qquad \qquad \quad\le
	u_{lb} + w_{\max}+ \ddot l_{\max}
	\le - a_{\max}
	,\\
	&\max_{|w|\le w_{\max}}
	\left(
	\inf_{u\in [u_{lb},u_{ub}]}
	\ddot h_{lb}
	\right)
	= 
	-u_{ub} +w_{\max} +\ddot l\notag\\
	&
	\qquad \qquad \qquad \quad \le
	-u_{ub} +w_{\max} +\ddot l_{\max}
	\le - a_{\max} 
\end{align} 
are guaranteed to hold for some $a_{\max}>0$. In addition, we define
\begin{align}
	H_{ub}(t,x)
	&=
	h_{ub}(t,x)
	+
	\frac{|\dot h_{ub}(t,x)|\dot h_{ub}(t,x)}{2a_{\max}}
	\notag\\
	&=
	x_1-l_{ub}
	+
	\frac{|x_2- \dot l|(x_2- \dot l)}{2a_{\max}},
	\label{num:mr_def_capH_ub}\\
	H_{lb}(t,x)
	&=
	h_{lb}(t,x)
	+
	\frac{|\dot h_{lb}(t,x)|\dot h_{lb}(t,x)}{2a_{\max}}
	\notag\\
	&=
	-x_1+l_{lb}
	-
	\frac{|x_2-\dot l|(x_2-\dot l)}{2a_{\max}}
	\label{num:mr_def_capH_lb}
\end{align}
and the sets
\begin{align}
	\mathcal{S}^{res}_{ub}(t)
	&=
	\left\{x \in \mathbb{R}^2 \mid 
	H_{ub}(t,x)\le0
	\enspace
	\land
	\enspace
	h_{ub}(t,x)\le0
	\right \},\notag\\
	\mathcal{S}^{res}_{lb}(t)
	&=
	\left\{x \in \mathbb{R}^2 \mid 
	H_{lb}(t,x)\le0 
	\enspace
	\land
	\enspace
	h_{lb}(t,x)\le0
	\right \}.\notag
\end{align}
The goal is to render the set $\mathcal{S}^{res}_c(t)$ defined as
\begin{align}
	& \mathcal{S}^{res}_c(t)
	=
	\mathcal{S}^{res}_{ub}(t)
	\cap
	\mathcal{S}^{res}_{lb}(t)\notag\\
	& \qquad
	\subseteq
	\left\{x \in \mathbb{R}^2 \mid 
	h_{ub}(t,x)\le0
	\enspace
	\land
	\enspace
	h_{lb}(t,x)\le0
	\right \}
	\label{num:mr_intersection_sets}
\end{align}
forward invariant under input constraints and input disturbances. 
\begin{lemma}[Forward invariance of $\mathcal{S}^{res}_c(t)$]
	Consider the set
	\begin{align}
		&
		U_c(t,x)
		\in
		\{
		u\in [u_{lb},u_{ub}]
		\mid
		\notag\\
		& \qquad \qquad
		g_{lb}(t,x,u,w)
		\le0
		\land 
		g_{ub}(t,x,u,w)
		\le0\}
	\end{align}
	with
	\begin{align}
		&
		g_{ub}(t,x,u)
		=
		\max_{|w|\le w_{\max}}
		\left (	
		\dot H_{ub}(t,x,u,w) 
		\right) \notag\\
		& \qquad \qquad\qquad \qquad \qquad \qquad
		-
		\alpha_{ub} 
		\left(
		-H_{ub}(t,x) 
		\right)
		,
		\label{num:mr_def_rcbf_str_ub}\\
		&
		g_{lb}(t,x,u)
		=
		\max_{|w|\le w_{\max}}
		\left (	
		\dot H_{lb} (t,x,u,w)
		\right) \notag\\
		& \qquad \qquad\qquad \qquad \qquad \qquad
		-
		\alpha_{lb} 
		\left(
		-H_{lb}(t,x)
		\right).
		\label{num:mr_def_rcbf_str_lb}
	\end{align}
	Any control law $u(t,x)$ that is locally Lipschitz continuous in $x$ and piecewise continuous in $t$ such that $\forall (t,x)\in[t_0,\infty)\times\mathcal{S}_c^{res}(t) \colon u(t,x) \in U_c(t,x)$ renders $\mathcal{S}^{res}_c(t)$ forward invariant under input constraints $u\in [u_{lb},u_{ub}]$ and disturbances $|w|\le w_{\max}$.
	\label{num:Leamma_forward_inv_Sc}
\end{lemma}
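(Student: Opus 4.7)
The plan is to reduce the statement to two independent applications of Theorem~\ref{num:theorem_theo9_rcbf_paper}, one for the upper-bound constraint and one for the lower-bound constraint, and then to argue that simultaneous forward invariance of the two supersets $\mathcal{S}^{res}_{ub}(t)$ and $\mathcal{S}^{res}_{lb}(t)$ along a single trajectory is equivalent to forward invariance of their intersection $\mathcal{S}^{res}_c(t)$.

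First I would verify the hypothesis of Theorem~\ref{num:theorem_theo9_rcbf_paper} separately for $h_{ub}$ and $h_{lb}$. The inequalities derived under (\ref{num:mr_asspt_ul_ub_cstr}) via (\ref{num:mr_def_h_ub_h_lb_ddot}) already establish $\max_{|w|\le w_{\max}}\inf_u \ddot h_{ub}\le -a_{\max}$ and $\max_{|w|\le w_{\max}}\inf_u \ddot h_{lb}\le -a_{\max}$ with a common $a_{\max}>0$. Consequently, $H_{ub}$ and $H_{lb}$ defined in (\ref{num:mr_def_capH_ub})--(\ref{num:mr_def_capH_lb}) are RCBFs on $\mathcal{S}^{res}_{ub}(t)$ and $\mathcal{S}^{res}_{lb}(t)$, respectively, and the individual admissible-input sets $U_{ub}(t,x)=\{u\in[u_{lb},u_{ub}]\mid g_{ub}(t,x,u)\le 0\}$ and $U_{lb}(t,x)=\{u\in[u_{lb},u_{ub}]\mid g_{lb}(t,x,u)\le 0\}$ are non-empty on their respective sets.

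Next I would note that $U_c(t,x)=U_{ub}(t,x)\cap U_{lb}(t,x)$, so any control law with $u(t,x)\in U_c(t,x)$ on $\mathcal{S}^{res}_c(t)$ simultaneously satisfies the RCBF differential inequality for both $H_{ub}$ and $H_{lb}$ whenever the trajectory is in $\mathcal{S}^{res}_c(t)$. For a solution starting in $\mathcal{S}^{res}_c(t_0)$, the comparison-lemma argument that underlies Theorem~\ref{num:theorem_theo9_rcbf_paper} then preserves $H_{ub}\le 0$ and $h_{ub}\le 0$ along the trajectory, and by a symmetric argument $H_{lb}\le 0$ and $h_{lb}\le 0$, so the trajectory stays in $\mathcal{S}^{res}_c(t)$ for all $t\ge t_0$.

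The step I expect to be the main obstacle is reconciling the fact that Theorem~\ref{num:theorem_theo9_rcbf_paper} literally requires the control law to satisfy $u(t,x)\in U_{ub}(t,x)$ on the whole of $\mathcal{S}^{res}_{ub}(t)$, whereas our hypothesis only prescribes $u\in U_c$ on the smaller set $\mathcal{S}^{res}_c(t)\subseteq\mathcal{S}^{res}_{ub}(t)$. I would close this gap by a Nagumo-type contradiction argument: if the trajectory were to exit $\mathcal{S}^{res}_c(t)$ for the first time at some $t^\ast$, then $x(t^\ast)$ would still lie in $\mathcal{S}^{res}_c(t^\ast)$ by continuity, the active constraint $H_{ub}$ or $H_{lb}$ would vanish there, and $\alpha_{ub}(0)=\alpha_{lb}(0)=0$ together with the RCBF inequality would force $\max_w\dot H$ of the active constraint to be non-positive, ruling out the exit. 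Alternatively, one may extend $u$ outside $\mathcal{S}^{res}_c(t)$ to a locally Lipschitz selection of each of $U_{ub}$ and $U_{lb}$ — possible because these sets are non-empty by Theorem~\ref{num:theorem_theo9_rcbf_paper} — and then invoke that theorem directly.
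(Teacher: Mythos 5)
Your proposal follows essentially the same route as the paper: apply Theorem~\ref{num:theorem_theo9_rcbf_paper} once to $H_{ub}$ on $\mathcal{S}^{res}_{ub}(t)$ and once to $H_{lb}$ on $\mathcal{S}^{res}_{lb}(t)$, then conclude forward invariance of the intersection $\mathcal{S}^{res}_c(t)=\mathcal{S}^{res}_{ub}(t)\cap\mathcal{S}^{res}_{lb}(t)$. The one place you go beyond the paper is the final paragraph: you correctly observe that the lemma's hypothesis only constrains $u(t,x)$ on $\mathcal{S}^{res}_c(t)$, while a literal reading of Theorem~\ref{num:theorem_theo9_rcbf_paper} demands $u(t,x)\in U_{ub}(t,x)$ on all of $\mathcal{S}^{res}_{ub}(t)$ (and likewise for the lower bound); the paper's own proof silently asserts the stronger premise and does not address this mismatch. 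Your Nagumo-type first-exit-time argument is the cleaner of your two fixes (note that the exit can also occur through the $h_{ub}=0$ or $h_{lb}=0$ portion of the boundary, where you must additionally use that $H\le 0$ and $h=0$ force $\dot h\le 0$); the alternative of extending $u$ to a Lipschitz selection on the larger sets is workable but needs care, since modifying the law off $\mathcal{S}^{res}_c(t)$ only helps once you know the trajectory never reaches the modified region. In short: same decomposition as the paper, with a legitimate patch for a step the paper leaves implicit.
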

\begin{proof}
	As $u(t,x)$ is Lipschitz continuous and satisfies (\ref{num:mr_def_rcbf_str_ub}) for all $t\in[t_0,\infty)$, $x\in\mathcal{S}_{ub}^{res}(t)$, it follows from Theorem \ref{num:theorem_theo9_rcbf_paper} that $\mathcal{S}_{ub}^{res}(t)$ is rendered forward invariant. Similarly, as $u(t,x)$ satisfies (\ref{num:mr_def_rcbf_str_lb}) for all $t\in[t_0,\infty)$, $x\in\mathcal{S}_{lb}^{res}(t)$, it follows from Theorem \ref{num:theorem_theo9_rcbf_paper} that $\mathcal{S}_{lb}^{res}(t)$ is rendered forward invariant. Hence, forward invariance of $\mathcal{S}^{res}_c(t)$ follows from $\mathcal{S}^{res}_c(t)
	=
	\mathcal{S}^{res}_{ub}(t)
	\cap
	\mathcal{S}^{res}_{lb}(t)$.
\end{proof}	
Before proceeding with the feasibility analysis, we study the boundedness of state $x_2$.
\begin{lemma}[Boundedness of $|x_2-\dot l|$]
	If $H_{ub}\le0$, $H_{lb}\le0$, $h_{ub}\le0$, and $h_{lb}\le0$, then 
	\begin{align}
		|x_2-\dot l|
		&
		\le
		\sqrt{(l_{ub}(t_0)-l_{lb}(t_0))2a_{\max}}
		\label{num:bdn_x2_1}
	\end{align}
	assuming $l_{ub}(t_0)-l_{lb}(t_0)\ge0$.
	\label{num:lemma_bdn_x_2}
\end{lemma}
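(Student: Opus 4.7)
The plan is to split into two cases according to the sign of $x_2-\dot l$, since the absolute-value term $|x_2-\dot l|(x_2-\dot l)$ appearing in (\ref{num:mr_def_capH_ub}) and (\ref{num:mr_def_capH_lb}) is quadratic in $x_2-\dot l$ but with different signs. The key observation is that in each sign case, \emph{one} of the two conditions $H_{ub}\le 0$ or $H_{lb}\le 0$ yields a useful quadratic upper bound on $(x_2-\dot l)^2$, while the \emph{opposite} constraint $h_{lb}\le0$ or $h_{ub}\le0$ is precisely what is needed to bound the linear remainder in $x_1$.

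First, if $x_2-\dot l\ge 0$, then $|x_2-\dot l|(x_2-\dot l)=(x_2-\dot l)^2$, so $H_{ub}\le0$ rearranges to $(x_2-\dot l)^2\le 2a_{\max}(l_{ub}-x_1)$. Using $h_{lb}\le0$, i.e.\ $x_1\ge l_{lb}$, gives $(x_2-\dot l)^2\le 2a_{\max}(l_{ub}-l_{lb})$. Second, if $x_2-\dot l<0$, then $|x_2-\dot l|(x_2-\dot l)=-(x_2-\dot l)^2$, and plugging into $H_{lb}\le0$ yields $(x_2-\dot l)^2\le 2a_{\max}(x_1-l_{lb})$, which combined with $h_{ub}\le0$ gives the same bound $(x_2-\dot l)^2\le 2a_{\max}(l_{ub}-l_{lb})$.

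Finally, I would invoke the standing assumption $\dot l_{ub}(t)=\dot l_{lb}(t)=\dot l(t)$, which implies that $l_{ub}(t)-l_{lb}(t)$ is constant in time and therefore equal to $l_{ub}(t_0)-l_{lb}(t_0)\ge 0$. Taking square roots of the uniform bound obtained in both cases delivers (\ref{num:bdn_x2_1}).

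The proof is essentially a direct computation; the only subtle point (and the one I would emphasize) is the cross-coupling between the two constraints: the bound from $H_{ub}$ alone is not tight in $x_1$, and it is the opposing constraint $h_{lb}$ that collapses the right-hand side to the constant box width $l_{ub}(t_0)-l_{lb}(t_0)$. No obstacle of substance is anticipated beyond making this case split and invariance-of-the-width observation explicit.
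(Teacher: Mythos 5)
Your proof is correct and follows essentially the same route as the paper: the paper likewise combines $H_{ub}\le0$ with $h_{lb}\le0$ and $H_{lb}\le0$ with $h_{ub}\le0$ to get the two one-sided bounds, splits on the sign of $x_2-\dot l$, and uses $\dot l_{ub}=\dot l_{lb}$ to replace $l_{ub}(t)-l_{lb}(t)$ by its initial value. No gaps.
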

\begin{proof}
	From (\ref{num:mr_def_capH_ub}) it follows
\begin{align}
	&
	|x_2- \dot l|(x_2- \dot l)\le (-x_1+l_{ub})2a_{\max}
\end{align}
for $H_{ub}\le0$. Substituting (\ref{num:mr_def_lb_cstr}) yields
\begin{align}
	&
	|x_2- \dot l|(x_2- \dot l)
	\le(l_{ub}-l_{lb})2a_{\max}.
	\label{num:bdn_x2_proof_1}
\end{align}
Similarly, from $H_{lb}\le0$, $h_{ub}\le0$ and (\ref{num:mr_def_capH_lb}), (\ref{num:mr_def_ub_cstr}) it follows
\begin{align}
	&
	-|x_2-\dot l|(x_2-\dot l)
	\le
	(l_{ub}-l_{lb})2a_{\max}.
	\label{num:bdn_x2_proof_2}
\end{align}
Evaluating (\ref{num:bdn_x2_proof_1}) for $x_2- \dot l\ge0$ yields
\begin{align}
	&
	0
	\le
	x_2 - \dot l,
	\qquad
	x_2 - \dot l
	\le 
	\sqrt{(l_{ub}-l_{lb})2a_{\max}}
\label{num:bdn_x2_proof_3}
\end{align}
and evaluating (\ref{num:bdn_x2_proof_2}) for $x_2- \dot l\le0$ yields
\begin{align}
	&
	-
	\sqrt{(l_{ub}-l_{lb})2a_{\max}}
\le
x_2 - \dot l	
,
\qquad
	x_2
	-\dot l
\le
0.
\label{num:bdn_x2_proof_4}
\end{align}
Because of $\dot l_{ub}=\dot l_{lb}=\dot l$, it follows $l_{ub}(t) - l_{lb}(t)=l_{ub}(t_0) - l_{lb}(t_0)$, then from (\ref{num:bdn_x2_proof_3}) and (\ref{num:bdn_x2_proof_4}), we derive (\ref{num:bdn_x2_1}).
\end{proof}

The main result with regard to feasibility can now be stated.
\begin{theorem}[Feasibility conditions]
	If $x(t_0)
			\in
			\mathcal{S}^{res}_c(t_0)$
	and $\alpha_{ub}, \alpha_{lb}\in \mathbb{R}_{>0}$ satisfy
				\begin{align}
			&
	\alpha_{ub}
	=
	\alpha_{lb}
	\ge
		2w_{\max}
		\sqrt{
		\frac{2}
		{(l_{ub}(t_0)-l_{lb}(t_0))a_{\max}}
		}	
		\label{num:feas_safeset_1aa}
	\end{align}
	with $l_{ub}(t_0)-l_{lb}(t_0)>0$,
	then for all $t\in[t_0,\infty)$ and $x\in\mathcal{S}^{res}_c(t)$ the set
		\begin{align}
		&
		U_c(t,x)
		=
		\left\{
		u\in[u_{lb},u_{ub}]
		\mid
		\Phi_{lb}(t,x)
		\le
		u
		\le
		\Phi_{ub}(t,x)
		\right\}
		\label{num:thoe_feas_cond_Umutual}
	\end{align}
		where
	\begin{align}
		\Phi_{lb}(t,x)
		&=	
		-
		\frac{a_{\max}}{|x_2-\dot l|}
		\alpha_{lb}
		\left(
		x_1
		-l_{lb}
		+
		\frac{|x_2-\dot l|(x_2-\dot l)}{2a_{\max}}
		\right)\notag\\
		&\qquad \quad 
		-
		\emph{\text{sgn}}(x_2-\dot l)a_{\max}
		+
		w_{\max}
		+
		\ddot l,
		\label{num:thoe_feas_cond_U_lb}\\
		\Phi_{ub}(t,x)
		&=
		\frac{a_{\max}}{|x_2-\dot l|}
		\alpha_{ub}
		\left(
		-x_1+l_{ub}
		-
		\frac{|x_2- \dot l|(x_2- \dot l)}{2a_{\max}}
		\right)\notag\\
		& \qquad \quad 
		-\emph{\text{sgn}}(
		x_2
		-
		\dot l)a_{\max}
		-
		w_{\max}
		+
		\ddot l
		\label{num:thoe_feas_cond_U_ub}
	\end{align}
	is guaranteed to be non-empty.
	Moreover, any control law $u(t,x)$ that is locally Lipschitz continuous in $x$ and piecewise continuous in $t$ such that $\forall(t,x) \in [t_0,\infty) \times \mathcal{S}^{res}_c(t) \colon u(t,x)\in U_c(t,x)$	renders $\mathcal{S}^{res}_c(t)$ forward invariant under input constraints $u\in [u_{lb},u_{ub}]$ and disturbances $|w|\le w_{\max}$.
	\label{num:theorem_feas_safeset}
\end{theorem}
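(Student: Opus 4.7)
The plan is to first rewrite the conditions $g_{ub}\le 0$ and $g_{lb}\le 0$ as explicit affine inequalities in $u$, producing the closed-form bounds $\Phi_{lb}$ and $\Phi_{ub}$. Differentiating (\ref{num:mr_def_capH_ub})--(\ref{num:mr_def_capH_lb}) along (\ref{num:mr_spec_sys_desc}) yields $\dot H_{ub}=(x_2-\dot l)+\tfrac{|x_2-\dot l|}{a_{\max}}(u+w-\ddot l)$ and its sign-reversed counterpart for $\dot H_{lb}$. The coefficient of $w$ is non-negative in $\dot H_{ub}$ and non-positive in $\dot H_{lb}$, so the worst-case disturbances are $w=+w_{\max}$ and $w=-w_{\max}$, respectively. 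On $\{|x_2-\dot l|>0\}$, solving each worst-case inequality for $u$ reproduces (\ref{num:thoe_feas_cond_U_lb})--(\ref{num:thoe_feas_cond_U_ub}), and $U_c$ becomes the intersection $[\Phi_{lb},\Phi_{ub}]\cap[u_{lb},u_{ub}]$. On the degenerate set $\{x_2=\dot l\}$ the worst-case $\dot H_{ub}=\dot H_{lb}=0$ while $-\alpha(-H)\le 0$, so $g_{ub},g_{lb}\le 0$ hold trivially and $U_c=[u_{lb},u_{ub}]$ is non-empty.

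Non-emptiness of the intersection reduces to the three inequalities $\Phi_{lb}\le\Phi_{ub}$, $u_{lb}\le\Phi_{ub}$, and $\Phi_{lb}\le u_{ub}$. The central step is the first. Computing $\Phi_{ub}-\Phi_{lb}$, the $\text{sgn}$-terms and the $\ddot l$-terms cancel, and setting $\alpha_{ub}=\alpha_{lb}=\alpha$ also kills the quadratic contributions $\tfrac{|x_2-\dot l|(x_2-\dot l)}{2a_{\max}}$, leaving
\[
\Phi_{ub}-\Phi_{lb}=\frac{a_{\max}\alpha}{|x_2-\dot l|}\bigl(l_{ub}-l_{lb}\bigr)-2w_{\max}.
\]
This expression is monotonically decreasing in $|x_2-\dot l|$, so its minimum over $\mathcal{S}^{res}_c(t)$ is attained at the largest admissible $|x_2-\dot l|$. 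Invoking Lemma \ref{num:lemma_bdn_x_2} together with $l_{ub}(t)-l_{lb}(t)=l_{ub}(t_0)-l_{lb}(t_0)$ and substituting the worst case $|x_2-\dot l|=\sqrt{2a_{\max}(l_{ub}(t_0)-l_{lb}(t_0))}$ reduces $\Phi_{ub}-\Phi_{lb}\ge 0$ to precisely (\ref{num:feas_safeset_1aa}).

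The remaining two inequalities follow from the structural form of $\Phi_{ub},\Phi_{lb}$ together with the control-authority assumption (\ref{num:mr_asspt_ul_ub_cstr}). Since $x\in\mathcal{S}^{res}_c(t)$ implies $-H_{ub},-H_{lb}\ge 0$, the $\alpha$-terms contribute non-negatively to $\Phi_{ub}$ and non-positively to $\Phi_{lb}$; the residual terms are bounded by $\pm a_{\max}\pm w_{\max}\pm\ddot l_{\max}$, giving $\Phi_{ub}\ge -a_{\max}-w_{\max}-\ddot l_{\max}\ge u_{lb}$ and $\Phi_{lb}\le a_{\max}+w_{\max}+\ddot l_{\max}\le u_{ub}$ by (\ref{num:mr_asspt_ul_ub_cstr}). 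Forward invariance is then immediate from Lemma \ref{num:Leamma_forward_inv_Sc}.

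The hardest point is the coupling between the a priori condition on $\alpha$ and the trajectory-dependent quantity $|x_2-\dot l|$. What makes it tractable is that Lemma \ref{num:lemma_bdn_x_2} supplies a bound depending only on the \emph{initial} box width $l_{ub}(t_0)-l_{lb}(t_0)$, an economy that rests on $\dot l_{ub}=\dot l_{lb}$. The apparent singularity of $\Phi_{ub},\Phi_{lb}$ at $x_2=\dot l$ is benign because $-H_{ub},-H_{lb}$ vanish simultaneously there, but I would avoid any limit argument and simply treat this case directly as above.
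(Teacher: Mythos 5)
Your proof is correct and follows the same overall skeleton as the paper's: derive the affine-in-$u$ bounds $\Phi_{lb},\Phi_{ub}$ from the worst-case RCBF conditions, reduce $\Phi_{ub}-\Phi_{lb}\ge 0$ to (\ref{num:feas_safeset_1aa}) via the velocity bound of Lemma \ref{num:lemma_bdn_x_2} together with $l_{ub}(t)-l_{lb}(t)=l_{ub}(t_0)-l_{lb}(t_0)$, and close with Lemma \ref{num:Leamma_forward_inv_Sc}. Where you genuinely diverge is the input-constraint step. The paper bounds $\Phi_{ub}$ from above and $\Phi_{lb}$ from below using $H_{ub}\ge l_{lb}-l_{ub}$ and $H_{lb}\ge l_{lb}-l_{ub}$, then substitutes the critical width $l_{ub}-l_{lb}=2w_{\max}|x_2-\dot l|/(a_{\max}\alpha)$ before comparing with $u_{lb},u_{ub}$, and it is at this stage that the paper invokes $\alpha_{ub}=\alpha_{lb}$. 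You instead observe that the $\alpha$-terms have a definite sign on $\mathcal{S}^{res}_c(t)$ (since $-H_{ub}\ge0$ and $-H_{lb}\ge0$) and simply drop them, obtaining $\Phi_{ub}\ge -a_{\max}-w_{\max}-\ddot l_{\max}\ge u_{lb}$ and $\Phi_{lb}\le a_{\max}+w_{\max}+\ddot l_{\max}\le u_{ub}$ directly from (\ref{num:mr_asspt_ul_ub_cstr}); combined with your explicit three-inequality criterion for the two intervals $[u_{lb},u_{ub}]$ and $[\Phi_{lb},\Phi_{ub}]$ to intersect, this is shorter, and it uses the equality $\alpha_{ub}=\alpha_{lb}$ only to collapse $\Phi_{ub}-\Phi_{lb}$ (where the paper instead works with $\alpha=\min\{\alpha_{lb},\alpha_{ub}\}$). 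You also treat the degenerate set $x_2=\dot l$, where $\Phi_{lb}$ and $\Phi_{ub}$ as written are singular, by noting that there $\max_{|w|\le w_{\max}}\dot H_{ub}=\max_{|w|\le w_{\max}}\dot H_{lb}=0\le\alpha(-H)$, so every $u\in[u_{lb},u_{ub}]$ is admissible; the paper leaves this case implicit, so your version closes a small presentational gap rather than opening one.
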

\begin{proof}
	In the following proof we require linear class-$\mathcal{K}$ functions and therefore make the restriction $\alpha_{ub}, \alpha_{lb}\in \mathbb{R}_{>0}$. According to Lemma \ref{num:Leamma_forward_inv_Sc}, to achieve forward invariance of $\mathcal{S}^{res}_c$, it is required that a control input $u\in[u_{lb},u_{ub}]$ exists that satisfies $g_{ub}\le0$ and $g_{lb}\le0$.
	
	\subsection{Upper bound RCBF constraint}
	\label{num:subsectheo_rcbf_ub}
	The upper bound RCBF constraint is $g_{ub}\le0$ with $g_{ub}$ from (\ref{num:mr_def_rcbf_str_ub}). Taking the time-derivative of (\ref{num:mr_def_capH_ub}) and substituting (\ref{num:mr_def_h_ub_h_lb_dot}) and (\ref{num:mr_def_h_ub_h_lb_ddot}) leads to
	\begin{align}
		\dot H_{ub}
		&=
		\dot h_{ub}
		+
		\frac{|\dot h_{ub}|\ddot h_{ub}}{a_{\max}}
		=
		x_2-\dot l
		+
		\frac{|x_2-\dot l|(u+w-\ddot l)}{a_{\max}}\notag
	\end{align}
	so that
	\begin{align}
		&
		\max_{|w|\le w_{\max}}
		 (	
		\dot H_{ub}
		)
		=
		x_2
		-
		\dot l
		+
		\frac{|x_2-\dot l|(u-\ddot l)}{a_{\max}}
		+
		\frac{|x_2-\dot l|w_{\max}}{a_{\max}}.
		\label{num:feas_safeset_proof_3}
	\end{align}
	Substituting (\ref{num:feas_safeset_proof_3}) and (\ref{num:mr_def_capH_ub}) in $g_{ub}\le0$ and rearranging yields 
	\begin{align}
		&
		\Phi_{ub}(t,x)
		=\frac{a_{\max}}{|x_2-\dot l|}
		\alpha_{ub}
		\left(
		-x_1+l_{ub}
		-
		\frac{|x_2- \dot l|(x_2- \dot l)}{2a_{\max}}
		\right)\notag\\
		& \qquad \qquad \qquad
		-\text{sgn}(
		x_2
		-
		\dot l)a_{\max}
		-
		w_{\max}
		+
		\ddot l
		\ge u
		\label{num:feas_safeset_proof_5}
	\end{align}
	so that $g_{ub}=-\Phi_{ub}+u\le0$ is satisfied if (\ref{num:feas_safeset_proof_5}) is satisfied.
	
	\subsection{Lower bound RCBF constraint}
	\label{num:subsectheo_rcbf_lb}
	The lower bound RCBF constraint is $g_{lb}\le0$ with $g_{lb}$ from (\ref{num:mr_def_rcbf_str_lb}).
	Taking the time-derivative of (\ref{num:mr_def_capH_lb}) and substituting (\ref{num:mr_def_h_ub_h_lb_dot}) and (\ref{num:mr_def_h_ub_h_lb_ddot}) leads to
	\begin{align}
		\dot H_{lb}
		&=
		\dot h_{lb}
		+
		\frac{|\dot h_{lb}|\ddot h_{lb}}{a_{\max}}
		=
		-x_2+\dot l
		+
		\frac{|-x_2+\dot l|(-u - w+\ddot l)}{a_{\max}}\notag
	\end{align}
	so that
	\begin{align}
		\max_{|w|\le w_{\max}}
		(	
		\dot H_{lb}
		)
		&=
		\dot l
		-x_2
		+
		\frac{|x_2-\dot l|(\ddot l-u)}{a_{\max}}
		+
		\frac{|x_2-\dot l|w_{\max}}{a_{\max}}.
		\label{num:feas_safeset_proof_8}
	\end{align}
	Substituting (\ref{num:feas_safeset_proof_8}) and (\ref{num:mr_def_capH_lb}) in $g_{lb}\le0$ yields
	\begin{align}
	u
	&\ge
	-
	\frac{a_{\max}}{|x_2-\dot l|}
	\alpha_{lb}
	\left(
	x_1
	-l_{lb}
	+
	\frac{|x_2-\dot l|(x_2-\dot l)}{2a_{\max}}
	\right)\notag\\
	&\qquad
	-
	\text{sgn}(x_2-\dot l)a_{\max}
	+
	w_{\max}
	+
	\ddot l
	= \Phi_{lb}(t,x)
	\label{num:feas_safeset_proof_10}
	\end{align}
	so that $g_{lb}=\Phi_{lb}-u\le0$ is satisfied if (\ref{num:feas_safeset_proof_10}) is satisfied.
	
	\subsection{Feasibility of mutual RCBF constraint}
	According to (\ref{num:feas_safeset_proof_5}) and (\ref{num:feas_safeset_proof_10}) the mutual RCBF constraint $\Phi_{lb}(t,x)
	\le
	u
	\le
	\Phi_{ub}(t,x)$ is feasible with respect to $u$ if
	\begin{align}
			\Phi_{ub}(t,x)
			&=
			\frac{a_{\max}}{|x_2-\dot l|}
			\alpha_{ub}
			\left(
			-x_1+l_{ub}
			-
			\frac{|x_2- \dot l|(x_2- \dot l)}{2a_{\max}}
			\right)
			\notag\\
			& \quad 
			-\text{sgn}(
			x_2
			-
			\dot l)a_{\max}
			-
			w_{\max}
			+
			\ddot l
			\notag\\
			& 
			\ge 
			-
			\frac{a_{\max}}{|x_2-\dot l|}
			\alpha_{lb}
			\left(
			x_1
			-l_{lb}
			+
			\frac{|x_2-\dot l|(x_2-\dot l)}{2a_{\max}}
			\right)
			\notag\\
			& \quad
			-
			\text{sgn}(x_2-\dot l)a_{\max}
			+
			w_{\max}
			+
			\ddot l
			= \Phi_{lb}(t,x)
		\label{num:feas_safeset_proof_11}
	\end{align}
	holds. If the states are in $\mathcal{S}^{res}_c(t)$, then $H_{ub}\le0$ and $H_{lb}\le0$ hold. Therefore, it follows
	\begin{align}
		&
		-H_{ub}
		=
		-x_1+l_{ub}
		-
		\frac{|x_2- \dot l|(x_2- \dot l)}{2a_{\max}}
		\ge0,
		\label{num:feas_safeset_proof_13}\\
		&
		-H_{lb}
		=
		x_1
		-
		l_{lb}
		+
		\frac{|x_2-\dot l|(x_2-\dot l)}{2a_{\max}}
		\ge0
		\label{num:feas_safeset_proof_14}.
	\end{align}
	By introducing $\alpha=\min\{\alpha_{lb},\alpha_{ub}\}$, implying $\alpha_{ub}\ge\alpha$ and $-\alpha\ge-\alpha_{lb}$, it follows from
	(\ref{num:feas_safeset_proof_11}), (\ref{num:feas_safeset_proof_13}), (\ref{num:feas_safeset_proof_14}) that mutual feasibility of the RCBF constraints can be achieved if
		\begin{align}
			&\frac{a_{\max}}{|x_2-\dot l|}
		\alpha
		\left(
		-x_1+l_{ub}
		-
		\frac{|x_2- \dot l|(x_2- \dot l)}{2a_{\max}}
		\right)\notag\\
		& \qquad \qquad \qquad
		-\text{sgn}(
		x_2
		-
		\dot l)a_{\max}
		-
		w_{\max}
		+
		\ddot l
		\notag\\
		& \ge
		-
		\frac{a_{\max}}{|x_2-\dot l|}
		\alpha
		\left(
		x_1
		-l_{lb}
		+
		\frac{|x_2-\dot l|(x_2-\dot l)}{2a_{\max}}
		\right)\notag\\
		&\qquad \qquad \qquad
		-
		\text{sgn}(x_2-\dot l)a_{\max}
		+
		w_{\max}
		+
		\ddot l
		\label{num:feas_safeset_proof_15}
	\end{align}
	holds. Rearranging (\ref{num:feas_safeset_proof_15})
	yields
		\begin{align}
		&
		\alpha
		\ge
		2w_{\max}
		\frac{|x_2-\dot l|}{a_{\max}(	l_{ub}
			-l_{lb})}
		\label{num:feas_safeset_proof_16b}
	\end{align}
	for $l_{ub}
	-l_{lb}>0$. Considering $l_{ub}(t) - l_{lb}(t)=l_{ub}(t_0) - l_{lb}(t_0)$ and Lemma \ref{num:lemma_bdn_x_2},  (\ref{num:feas_safeset_1aa}) follows from (\ref{num:feas_safeset_proof_16b}). The restriction $\alpha =\alpha_{lb}=\alpha_{ub}$ results from the input constraints as shown below.

\subsection{Input constraints}
It remains to show that at least one of the feasible solutions of the mutual RCBF constraint also satisfies the input constraints. With (\ref{num:thoe_feas_cond_U_lb}), (\ref{num:thoe_feas_cond_U_ub}), (\ref{num:feas_safeset_proof_13}), and
(\ref{num:feas_safeset_proof_14}) the mutual RCBF constraint $\Phi_{lb}(t,x)
\le
u
\le
\Phi_{ub}(t,x)$ is
	\begin{align}
	&
	\frac{a_{\max}}{|x_2-\dot l|}
	\alpha_{ub}
	(-H_{ub})
	-\text{sgn}(
	x_2
	-
	\dot l)a_{\max}
	-
	w_{\max}
	+
	\ddot l
	\ge
	u\notag\\
	\ge
	&
	-
	\frac{a_{\max}}{|x_2-\dot l|}
	\alpha_{lb}
	(-H_{lb})
	-
	\text{sgn}(x_2-\dot l)a_{\max}
	+
	w_{\max}
	+
	\ddot l
	.
		\label{num:feas_safeset_proof_23}
\end{align}
From (\ref{num:mr_def_capH_ub}), (\ref{num:mr_def_capH_lb}) and $-H_{ub}\ge0$ and $-H_{lb}\ge0$ it follows
\begin{align}
	H_{ub}
	&=
	-H_{lb}
	+l_{lb}
	-l_{ub}
	\ge
	l_{lb}
	-l_{ub}
	,\label{num:feas_safeset_proof_24}\\
	H_{lb}
	&=
	-H_{ub}
	+l_{lb}
	-l_{ub}
	\ge
	l_{lb}
	-l_{ub}
	\label{num:feas_safeset_proof_25}
\end{align}
which with (\ref{num:feas_safeset_proof_23}) yields
	\begin{align}
	&
	\frac{a_{\max}}{|x_2-\dot l|}
	\alpha_{ub}
	(l_{ub}-l_{lb})
	-\text{sgn}(
	x_2
	-
	\dot l)a_{\max}
	-
	w_{\max}
	+
	\ddot l
	\notag\\
	&\ge
	\Phi_{ub}(t,x)
	\ge
	u
	\ge
	\Phi_{lb}(t,x)
		\ge
	-
	\frac{a_{\max}}{|x_2-\dot l|}
	\alpha_{lb}
	(l_{ub}-l_{lb})
	\notag\\
	&
	\qquad  
	-
	\text{sgn}(x_2-\dot l)a_{\max}
	+
	w_{\max}
	+
	\ddot l
		\label{num:feas_safeset_proof_26}.
\end{align}
According to (\ref{num:feas_safeset_proof_16b}), feasible solutions for $\Phi_{ub}
\ge
u
\ge
\Phi_{lb}$ with respect to $u$ exist if $l_{ub}
-
l_{lb}
\ge
2
w_{\max}
\frac{|x_2-\dot l|}{a_{\max}\alpha}$. Therefore, by substituting $2
w_{\max}
\frac{|x_2-\dot l|}{a_{\max}\alpha}$ for $l_{ub}
-
l_{lb}$  in (\ref{num:feas_safeset_proof_26}), there always exists at least one feasible solution within the bounds
	\begin{align}
	&
2
w_{\max}
\frac{\alpha_{ub}}{\alpha}
-\text{sgn}(
x_2
-
\dot l)a_{\max}
-
w_{\max}
+
\ddot l
\ge
u\notag\\
\ge
&
-
2
w_{\max}
\frac{\alpha_{lb}}{\alpha}
-
\text{sgn}(x_2-\dot l)a_{\max}
+
w_{\max}
+
\ddot l
	\label{num:feas_safeset_proof_27}
\end{align}
of $u$.
For the upper and lower bounds of the input constraints it follows 
	\begin{align}
	u_{ub}
	&\ge
	a_{\max}
	+
	w_{\max}
	+
	\ddot l_{\max},
	\label{num:feas_safeset_proof_28}\\
	u_{lb}
	&\le
	-a_{\max}
	-
	w_{\max}
	-
	\ddot l_{\max}
	\label{num:feas_safeset_proof_29}
\end{align}
from (\ref{num:mr_asspt_ul_ub_cstr}). Considering the upper and lower bounds of (\ref{num:feas_safeset_proof_27}) for $\alpha_{ub}=\alpha_{lb}=\alpha$ and substituting (\ref{num:feas_safeset_proof_28}) and (\ref{num:feas_safeset_proof_29}) in (\ref{num:feas_safeset_proof_27}) shows that the feasible solution lies within the bounds of the input constraints.

\subsection{Forward invariance of $\mathcal{S}^{res}_c(t)$}
The constraints $\Phi_{lb}
\le
u
\le
\Phi_{ub}$ and $g_{lb}=\Phi_{lb}-u
\le0
\land 
g_{ub}=-\Phi_{ub}+u\le0$ are equivalent. Therefore, by applying any Lipschitz continuous controller $u(t,x)\in U_c(t,x)$ with $U_c(t,x)$ of (\ref{num:thoe_feas_cond_Umutual}) it follows from Lemma \ref{num:Leamma_forward_inv_Sc} that $\mathcal{S}^{res}_c(t)$ is rendered forward invariant.
\end{proof}

\section{Numerical Example}	
\label{sec:num_example}

\begin{figure*}
	\subfloat[]{%
		\includegraphics[width=0.49\linewidth]{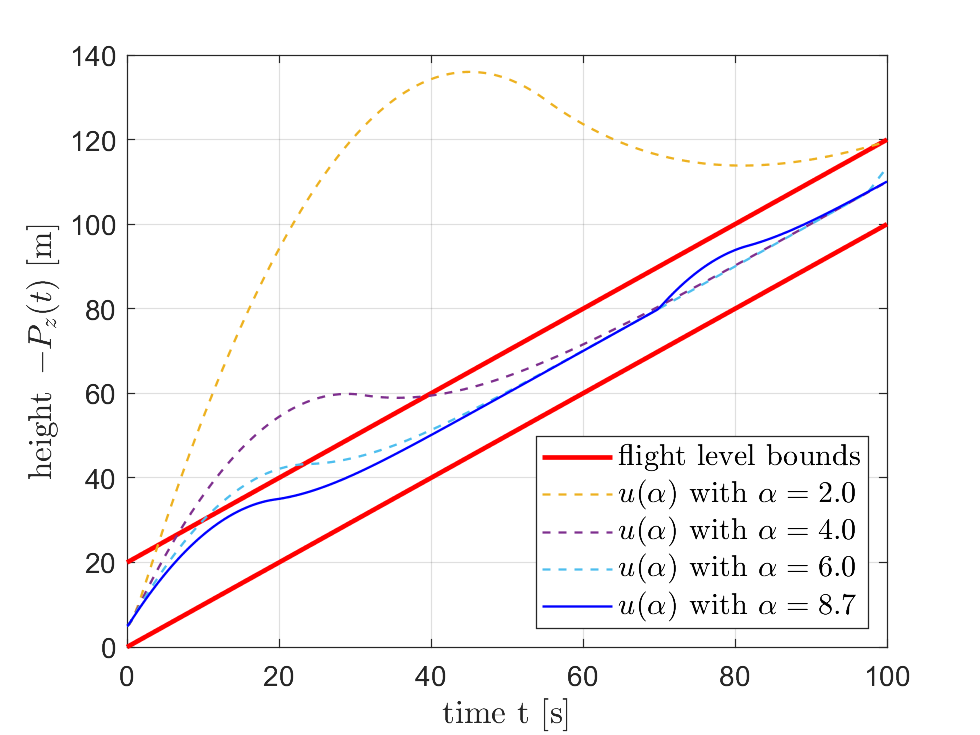}}
	\hfill
	\subfloat[]{\includegraphics[width=0.49\linewidth]{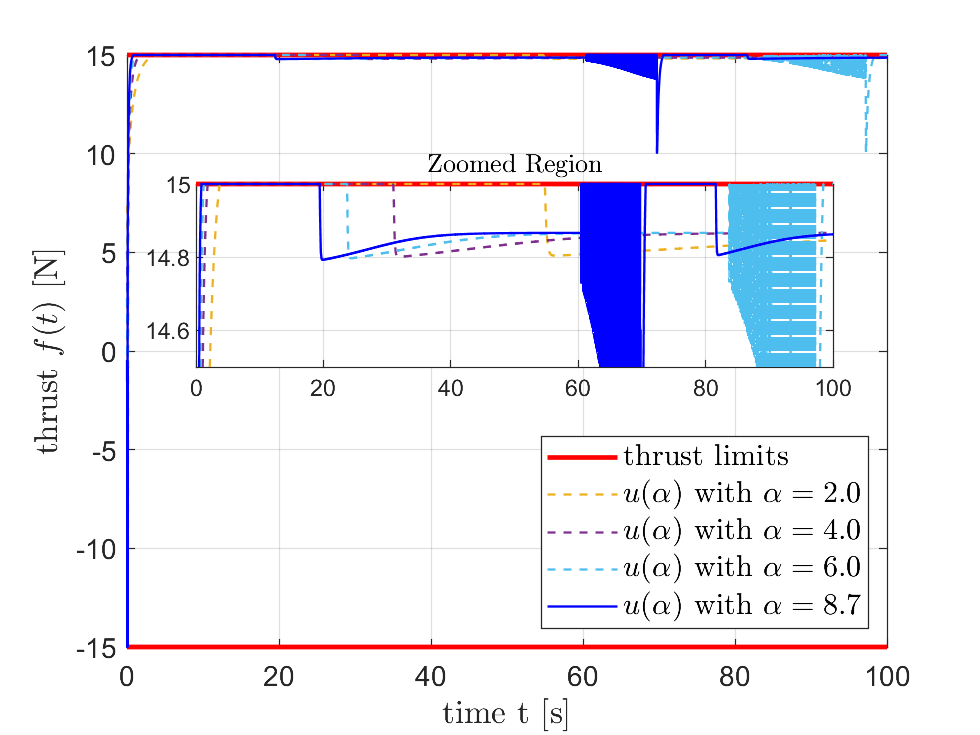}}
	\vspace*{-0.3cm}\\
	\subfloat[]{
		\includegraphics[width=0.49\linewidth]{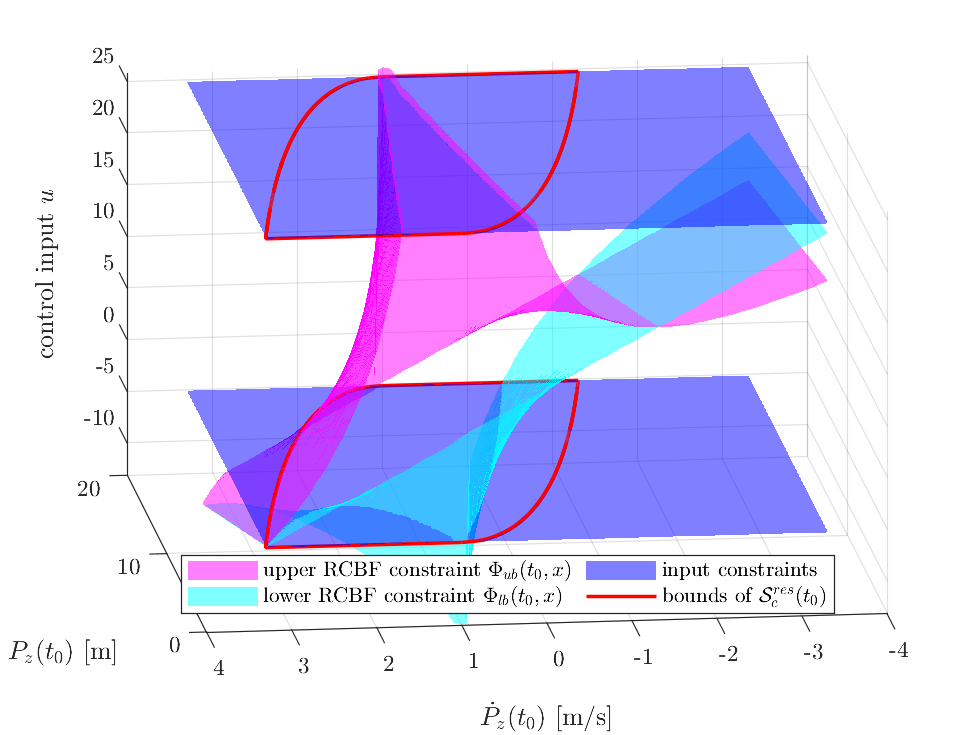}}
	\hfill
	\subfloat[]{\includegraphics[width=0.49\linewidth]{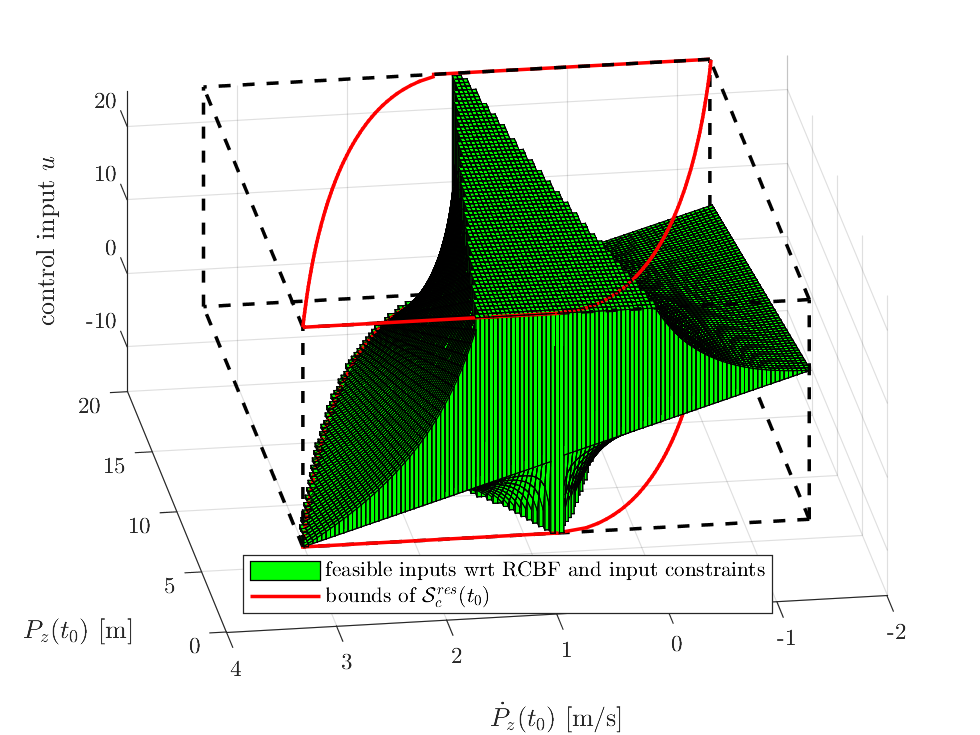}}
	\vspace*{-0.3cm}\\
	\subfloat[]{\includegraphics[width=0.49\linewidth]{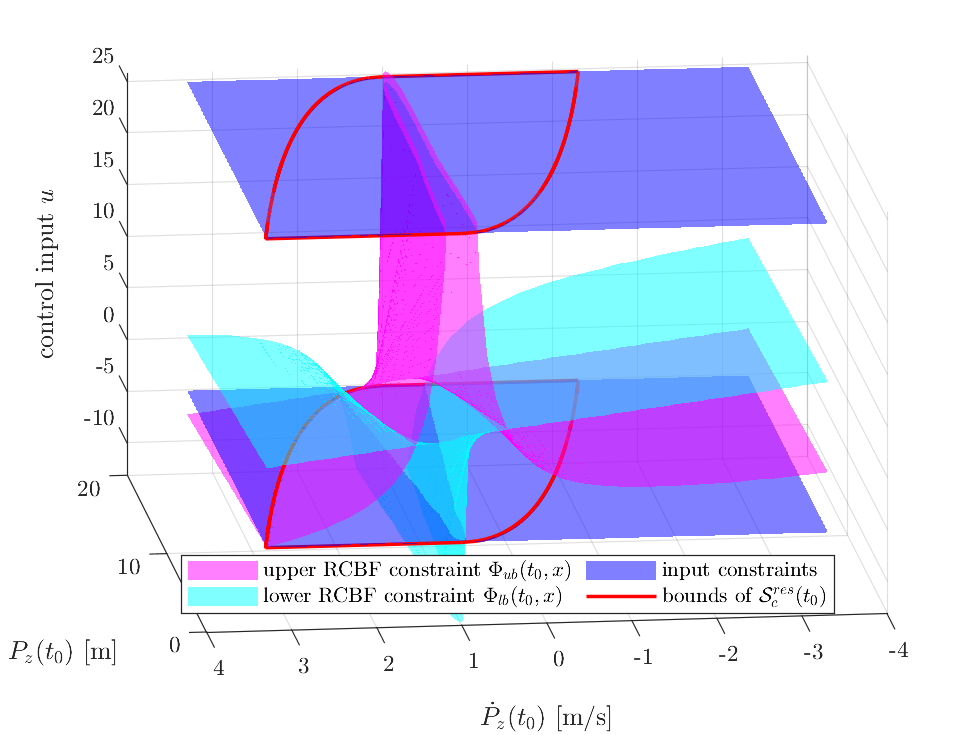}}
	\hfill
	\subfloat[]{%
		\includegraphics[width=0.49\linewidth]{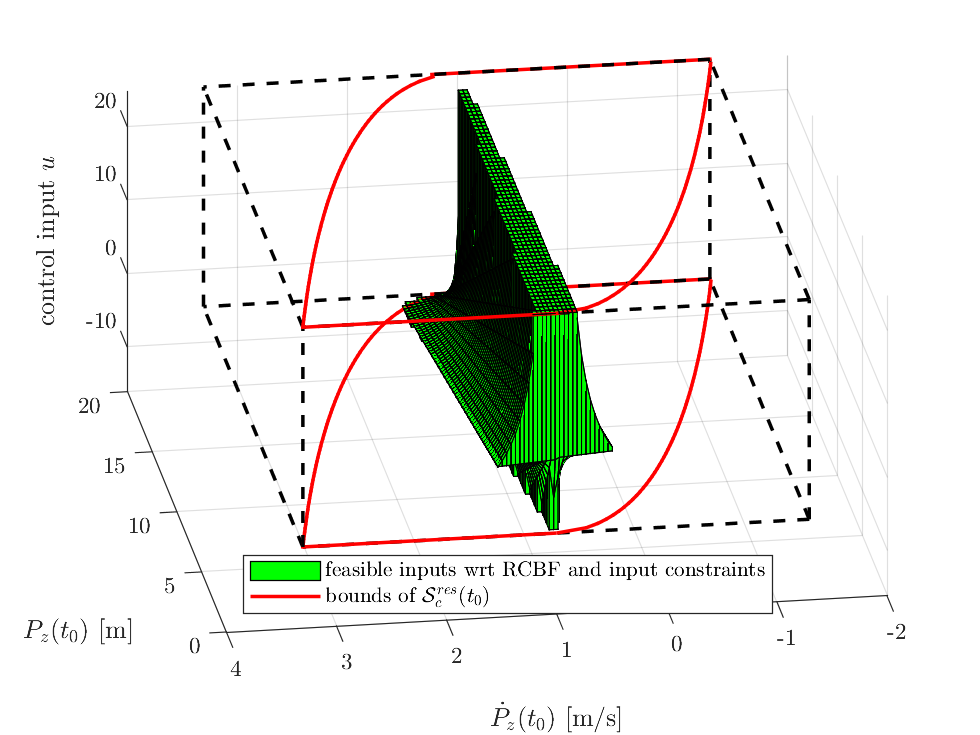}}
	\caption{(a): Application of control law (\ref{num:num_example_5}) for different values of $\alpha$. (b): Constrained input thrust $f$. (c):  Visualization of $\Phi_{ub}$, $\Phi_{lb}$ according to (\ref{num:thoe_feas_cond_U_lb}) and (\ref{num:thoe_feas_cond_U_ub}) as well as input constraints for $t=t_0$ and $\alpha=\alpha_{ub}=\alpha_{lb}=8.7$. (d): Intersection of the hyperplanes of Fig. \ref{fig:num_exampl_sim_results} (c). (e): Visualization of $\Phi_{ub}$, $\Phi_{lb}$ according to (\ref{num:thoe_feas_cond_U_lb}) and (\ref{num:thoe_feas_cond_U_ub}) as well as input constraints for $t=t_0$ and $\alpha=\alpha_{ub}=\alpha_{lb}=2.0$ . (f): Intersection of the hyperplanes of Fig. \ref{fig:num_exampl_sim_results} (e).} 
	\label{fig:num_exampl_sim_results}
\end{figure*}

As described in \cite{Koo1999Diff,Lee2010Unmanned}, a common model for position control of helicopters and quadrotors is
	\begin{align}
	&
	\ddot P
	=
	ge_3 
	-
	\frac{f}{m}Re_3
	+
	w,
	\label{num:num_example_1}
\end{align}
where $\ddot P\in\mathbb{R}^3$ are the accelerations in the NED frame, $g=\SI{9.81}{m/s^2}$ is the gravitational acceleration, $e_3$ is $e_3= [0, 0,1]^T$, $m\in\mathbb{R}$ is the mass, $f\in\mathbb{R}$ is the input thrust, $R\in\mathbb{R}^{3\times3}$ is the transformation matrix from the body-fixed to the NED frame, and $w\in\mathbb{R}^3$ are the disturbances. For height control it is sufficient to consider the vertical component of (\ref{num:num_example_1}) yielding
	\begin{align}
	&
	\ddot P_z
	=
	g 
	-
	\frac{f}{m}
	\cos(\phi)\cos(\theta)
	+
	w_z
	\label{num:num_example_2}
\end{align}
with roll and pitch angles assumed as $\phi=0$ and $\theta=\SI{-5}{deg}$. For the thrust input of the  aerial vehicle we define
$
f
	=
	\frac{m}{\cos(\phi)\cos(\theta)}
	(g-u)$.
Substituting $f$ in (\ref{num:num_example_2}) leads to 
\begin{align}
		&
		\begin{bmatrix}
		\dot P_z\\
		\ddot P_z
		\end{bmatrix}
	=
	\begin{bmatrix}
		\dot P_z\\
			u
		+
		w_z
	\end{bmatrix}
	\label{num:num_example_4}
\end{align}
which has the form of (\ref{num:mr_spec_sys_desc}).
The mass, input thrust, and input disturbances are assumed as $m=\SI{1}{kg}$, $f\in[\SI{-15}{N},\SI{15}{N}]$, and $w_z= w_{\max}=\SI{5}{m/s^2}$. With $f$, the input constraints can be mapped to $u\in[u_{lb}=\SI{-5.13}{m/s^2},u_{ub}=\SI{24.75}{m/s^2}]$. 

The control goal is to achieve a change in the flight level according to $l_{lb}(t) \le -P_z(t) \le l_{ub}(t)$ under limited thrust $f$ and input disturbances $w_z$ (e.\,g. wind turbulence). We chose the constant rate of climb $\dot l= \dot l_{lb}=\dot l_{ub}=\SI{1}{m/s}$. The initial values are $P_z(t_0)=\SI{5}{m}$, $\dot P_z(t_0)=\SI{0}{m/s}$, $l_{ub}(t_0)=\SI{20}{m}$, $l_{lb}(t_0)=\SI{0}{m}$.
We select 
$
	a_{\max}
	=
	-
	\max\{
	u_{lb} + w_{\max},
	-u_{ub} +w_{\max} 
	\}
	=
	\SI{0.1329}{m/s^2}
$
according to (\ref{num:mr_asspt_ul_ub_cstr}) leading to a positive $a_{\max}$ as required. For the control law we follow Theorem \ref{num:theorem_feas_safeset} and choose the specific solution 
\begin{align}
&u(\alpha) =	\frac{\Phi_{ub}+\Phi_{lb}}{2}
= -\text{sgn}(
\dot P_z
-
\dot l)a_{\max}
\notag\\
 &  +
 \frac{a_{\max}}{|\dot P_z-\dot l|}
 \alpha
 \left(
 \frac{l_{ub} +
 	l_{lb}}{2}
 	-P_z
 -
 \frac{|\dot P_z- \dot l|(\dot P_z- \dot l)}{2a_{\max}}
 \right) 
 \label{num:num_example_5}
\end{align}
of (\ref{num:thoe_feas_cond_Umutual}) which according to (\ref{num:feas_safeset_1aa}) is known to be a feasible solution of rendering
\begin{align}
	\mathcal{S}^{res}_c(t)
	=
	\Big\{
	(P_z(t), \dot P_z(t)) \in \mathbb{R}^2 \mid \notag\\ 
	P_z(t)-l_{ub}(t)
	+
	\frac{|\dot P_z(t)- \dot l|(\dot P_z(t)- \dot l)}{2a_{\max}}&\le0 \enspace \land\notag\\ 
	-P_z(t)+l_{lb}(t)
	-
	\frac{|\dot P_z(t)-\dot l|(\dot P_z(t)-\dot l)}{2a_{\max}}&\le0 \enspace \land\notag\\
	P_z(t)-l_{ub}(t)\le0
	\enspace \land \enspace
	-P_z(t)+l_{lb}(t)&\le0
	\Big\}
	\label{num:num_example_7}
\end{align}
forward invariant if  $\alpha$ of (\ref{num:num_example_5}) is selected as
				\begin{align}
	&
	\alpha
	\ge
		2w_{\max}
	\sqrt{
		\frac{2}
		{(l_{ub}(t_0)-l_{lb}(t_0))a_{\max}}
	}	
		=
		8.6737	
		\label{num:num_example_6}
\end{align}
and the initial states $P_z(t_0)$, $\dot P_z(t_0)$ are in $\mathcal{S}^{res}_c(t_0)$. For the initial conditions it can be verified that  $[P_z(t_0),\dot P_z(t_0)]^T=[\SI{5}{m},\SI{0}{m/s}]^T\in\mathcal{S}^{res}_c(t_0)$ holds.

The simulation results are presented in Fig. \ref{fig:num_exampl_sim_results}. In \mbox{Fig. \ref{fig:num_exampl_sim_results} (a)} the control law (\ref{num:num_example_5}) is applied for different values of $\alpha$ and the change of the flight level of the aerial vehicle is shown over time. The bounds of the constraints are clearly violated if $\alpha$ is chosen too small. However, choosing $\alpha$ large enough as stated in (\ref{num:num_example_6}) guarantees feasibility of the constrained control problem and leads to a valid solution. In \mbox{Fig. \ref{fig:num_exampl_sim_results} (b)} the input thrust $f$ is visualized to point out that the input constraints are hardly enforced. The simulated input disturbance is always maximum i.e. $w_z(t)=w_{\max}$. The domain of feasible inputs according to the mutual RCBF constraint (\ref{num:thoe_feas_cond_Umutual}) and the input constraints are visualized in \mbox{Fig. \ref{fig:num_exampl_sim_results} (c--f)} for two different values of $\alpha$ and for $t=t_0$. In \mbox{Fig. \ref{fig:num_exampl_sim_results} (c--d)} the value of $\alpha$ is $8.7$ in accordance to (\ref{num:num_example_6}) and feasible inputs can be found for all $(P_z(t_0),\dot P_z(t_0))\in\mathcal{S}^{res}_c(t_0)$ which is required to guarantee forward invariance of $\mathcal{S}^{res}_c(t)$. In \mbox{Fig. \ref{fig:num_exampl_sim_results} (e--f)} the value of $\alpha$ is $2.0$, not being in accordance to (\ref{num:num_example_6}). Hence, there exist $(P_z(t_0),\dot P_z(t_0))\in\mathcal{S}^{res}_c(t_0)$ without feasible inputs.

\section{Conclusions}

This paper studies the mutual feasibility of robust control barrier functions (RCBFs) for bounding box constraints. Feasibility of the constrained problem is in general not guaranteed using RCBFs. Conditions for the parameters of the RCBFs have been derived to guarantee that a feasible solution exists.

\addtolength{\textheight}{-12cm}   






\bibliographystyle{IEEEtran}
\bibliography{mybibfile}

\begin{thebibliography}{10}
\providecommand{\url}[1]{#1}
\csname url@rmstyle\endcsname
\providecommand{\newblock}{\relax}
\providecommand{\bibinfo}[2]{#2}
\providecommand\BIBentrySTDinterwordspacing{\spaceskip=0pt\relax}
\providecommand\BIBentryALTinterwordstretchfactor{4}
\providecommand\BIBentryALTinterwordspacing{\spaceskip=\fontdimen2\font plus
\BIBentryALTinterwordstretchfactor\fontdimen3\font minus
  \fontdimen4\font\relax}
\providecommand\BIBforeignlanguage[2]{{%
\expandafter\ifx\csname l@#1\endcsname\relax
\typeout{** WARNING: IEEEtran.bst: No hyphenation pattern has been}%
\typeout{** loaded for the language `#1'. Using the pattern for}%
\typeout{** the default language instead.}%
\else
\language=\csname l@#1\endcsname
\fi
#2}}

\bibitem{ames2019control}
A.~D. Ames, S.~Coogan, M.~Egerstedt, G.~Notomista, K.~Sreenath, and P.~Tabuada,
  ``Control barrier functions: Theory and applications,'' in \emph{2019 18th
  European control conference (ECC)}.\hskip 1em plus 0.5em minus 0.4em\relax
  IEEE, 2019, pp. 3420--3431.

\bibitem{Alan2022Safe}
A.~Alan, A.~J. Taylor, C.~R. He, G.~Orosz, and A.~D. Ames, ``Safe controller
  synthesis with tunable input-to-state safe control barrier functions,''
  \emph{IEEE Control Systems Letters}, vol.~6, pp. 908--913, 2022.

\bibitem{Buch2022Robust}
J.~Buch, S.-C. Liao, and P.~Seiler, ``Robust control barrier functions with
  sector-bounded uncertainties,'' \emph{IEEE Control Systems Letters}, vol.~6,
  pp. 1994--1999, 2022.

\bibitem{Cohen2022Robust}
M.~H. Cohen, C.~Belta, and R.~Tron, ``Robust control barrier functions for
  nonlinear control systems with uncertainty: A duality-based approach,'' in
  \emph{2022 IEEE 61st Conference on Decision and Control (CDC)}, 2022, pp.
  174--179.

\bibitem{Seiler2022Control}
P.~Seiler, M.~Jankovic, and E.~Hellstrom, ``Control barrier functions with
  unmodeled input dynamics using integral quadratic constraints,'' \emph{IEEE
  Control Systems Letters}, vol.~6, pp. 1664--1669, 2022.

\bibitem{Das2022Robust}
E.~Daş and R.~M. Murray, ``Robust safe control synthesis with disturbance
  observer-based control barrier functions,'' in \emph{2022 IEEE 61st
  Conference on Decision and Control (CDC)}, 2022, pp. 5566--5573.

\bibitem{Wang2023Robust}
Y.~Wang and X.~Xu, ``Disturbance observer-based robust control barrier
  functions,'' in \emph{2023 American Control Conference (ACC)}, 2023, pp.
  3681--3687.

\bibitem{JANKOVIC2018359}
M.~Jankovic, ``Robust control barrier functions for constrained stabilization
  of nonlinear systems,'' \emph{Automatica}, vol.~96, pp. 359--367, 2018.

\bibitem{Agrawal2021Safe}
D.~R. Agrawal and D.~Panagou, ``Safe control synthesis via input constrained
  control barrier functions,'' in \emph{2021 60th IEEE Conference on Decision
  and Control (CDC)}, 2021, pp. 6113--6118.

\bibitem{Breeden2021High}
J.~Breeden and D.~Panagou, ``High relative degree control barrier functions
  under input constraints,'' in \emph{2021 60th IEEE Conference on Decision and
  Control (CDC)}, 2021, pp. 6119--6124.

\bibitem{Dai2023Convex}
H.~Dai and F.~Permenter, ``Convex synthesis and verification of
  control-{L}yapunov and barrier functions with input constraints,'' in
  \emph{2023 American Control Conference (ACC)}, 2023, pp. 4116--4123.

\bibitem{Rabiee2023Soft}
P.~Rabiee and J.~B. Hoagg, ``Soft-minimum barrier functions for safety-critical
  control subject to actuation constraints,'' in \emph{2023 American Control
  Conference (ACC)}, 2023, pp. 2646--2651.

\bibitem{XIAO2022109960}
W.~Xiao, C.~A. Belta, and C.~G. Cassandras, ``Sufficient conditions for
  feasibility of optimal control problems using control barrier functions,''
  \emph{Automatica}, vol. 135, p. 109960, 2022.

\bibitem{Breeden2023Compositions}
J.~Breeden and D.~Panagou, ``Compositions of multiple control barrier functions
  under input constraints,'' in \emph{2023 American Control Conference (ACC)},
  2023, pp. 3688--3695.

\bibitem{Zeng2021SafetyCritical}
J.~Zeng, B.~Zhang, Z.~Li, and K.~Sreenath, ``Safety-critical control using
  optimal-decay control barrier function with guaranteed point-wise
  feasibility,'' in \emph{2021 American Control Conference (ACC)}, 2021, pp.
  3856--3863.

\bibitem{breeden2023robust}
J.~Breeden and D.~Panagou, ``Robust control barrier functions under high
  relative degree and input constraints for satellite trajectories,''
  \emph{Automatica}, vol. 155, p. 111109, 2023.

\bibitem{Choi2021Robust}
J.~J. Choi, D.~Lee, K.~Sreenath, C.~J. Tomlin, and S.~L. Herbert, ``Robust
  control barrier–value functions for safety-critical control,'' in
  \emph{2021 60th IEEE Conference on Decision and Control (CDC)}, 2021, pp.
  6814--6821.

\bibitem{Notomista2019An_O}
G.~Notomista, S.~Mayya, S.~Hutchinson, and M.~Egerstedt, ``An optimal task
  allocation strategy for heterogeneous multi-robot systems,'' in \emph{2019
  18th European Control Conference (ECC)}, 2019, pp. 2071--2076.

\bibitem{Srinivasan2021ControlOf}
M.~Srinivasan and S.~Coogan, ``Control of mobile robots using barrier functions
  under temporal logic specifications,'' \emph{IEEE Transactions on Robotics},
  vol.~37, no.~2, pp. 363--374, 2021.

\bibitem{Black2023Adaptation}
M.~Black and D.~Panagou, ``Adaptation for validation of consolidated control
  barrier functions,'' in \emph{2023 62nd IEEE Conference on Decision and
  Control (CDC)}, 2023, pp. 751--757.

\bibitem{Glotfelter2017Nonsmooth}
P.~Glotfelter, J.~Cortés, and M.~Egerstedt, ``Nonsmooth barrier functions with
  applications to multi-robot systems,'' \emph{IEEE Control Systems Letters},
  vol.~1, no.~2, pp. 310--315, 2017.

\bibitem{molnar2023composing}
T.~G. Molnar and A.~D. Ames, ``Composing control barrier functions for complex
  safety specifications,'' \emph{IEEE Control Systems Letters}, 2023.

\bibitem{Isaly2024OnTheFeas}
A.~Isaly, M.~Ghanbarpour, R.~G. Sanfelice, and W.~E. Dixon, ``On the
  feasibility and continuity of feedback controllers defined by multiple
  control barrier functions,'' \emph{IEEE Transactions on Automatic Control},
  pp. 1--15, 2024.

\bibitem{Tan2022Compatibility}
X.~Tan and D.~V. Dimarogonas, ``Compatibility checking of multiple control
  barrier functions for input constrained systems,'' in \emph{2022 IEEE 61st
  Conference on Decision and Control (CDC)}, 2022, pp. 939--944.

\bibitem{Koo1999Diff}
T.~Koo and S.~Sastry, ``Differential flatness based full authority helicopter
  control design,'' in \emph{Proceedings of the 38th IEEE Conference on
  Decision and Control (Cat. No.99CH36304)}, vol.~2, 1999, pp. 1982--1987
  vol.2.

\bibitem{Lee2010Unmanned}
T.~Lee, M.~Leok, and N.~H. McClamroch, ``Geometric tracking control of a
  quadrotor {UAV} on {SE}(3),'' in \emph{49th IEEE Conference on Decision and
  Control (CDC)}, 2010, pp. 5420--5425.

\end{thebibliography}

\end{document}